\newtheorem{lemma}{Lemma}[section]
\newtheorem{theorem}{Theorem}[section]
\newtheorem{remark}{Remark}[section]
\title[Hypercontractivity of the semigroup of the fractional laplacian on the $n$-sphere]{Hypercontractivity of the semigroup\\ of the fractional laplacian on the $n$-sphere}
\author{Rupert L. Frank}
\address[R. Frank]{Mathematics 253-37, Caltech, Pasa\-de\-na, CA 91125, USA, and Mathematisches Institut, Ludwig-Maximilans Univers\"at M\"unchen, The\-resienstr. 39, 80333 M\"unchen, Germany}
\email{rlfrank@caltech.edu}
\author{Paata Ivanisvili}
\address[P. Ivanisvili]{Department of Mathematics,
North Carolina State University,
Raleigh, NC 27695}
\email{pivanis@ncsu.edu}
\begin{document}

%\maketitle

\begin{abstract}
For $1<p\leq q$ we show that the Poisson semigroup $e^{-t\sqrt{-\Delta}}$  on  the $n$-sphere is hypercontractive from $L^{p}$ to $L^{q}$  in dimensions $n \leq  3$ if and only if $e^{-t\sqrt{n}} \leq \sqrt{\frac{p-1}{q-1}}$. We also show that the equivalence fails in large dimensions. 
\end{abstract}

\keywords{Hypercontractivity, Poisson Semigroup, n-sphere}
\subjclass[2010]{39B62, 42B35, 47A30}

 \renewcommand{\thefootnote}{${}$} \footnotetext{\copyright\, 2021 by the author. This paper may be reproduced, in its entirety, for non-commercial purposes.}

\maketitle

\section{Introduction}
\subsection{Poisson semigroup on the sphere}
Let 
\begin{align*}
\mathbb{S}^{n} = \{ x \in \mathbb{R}^{n+1}\, :\, \|x\| =1\}
\end{align*}
 be the unit sphere in $\mathbb{R}^{n+1}$, where $\|x\| = \sqrt{x_{1}^{2}+\ldots+x_{n+1}^{2}}$ for $x = (x_{1}, \ldots, x_{n+1}) \in \mathbb{R}^{n+1}$.  Let $\Delta$ be the Laplace--Beltrami operator on $\mathbb S^n$. We will be working with spherical polynomials $f : \mathbb{S}^{n} \to \mathbb{C}$, i.e., finite sums 
 $$
 f(\xi) = \sum_{d\geq 0} H_{d}(\xi),
 $$
 where $H_{d}$ satisfies 
 $$
 \Delta H_{d} = -d(d+n-1) H_{d}.
 $$
  The heat semigroup $e^{t\Delta}$ is defined by $e^{t\Delta} f = \sum_{d\geq 0} e^{-d(d+n-1)t}H_{d}$. The hypercontractivity result for the heat semigroup  on $\mathbb{S}^{n}$ states that for any $1\leq p\leq q<\infty$, any integer $n \geq 1$, and any $t\geq 0$ we have 
 \begin{align}\label{heat}
 \| e^{t\Delta} f\|_{q} \leq \|f\|_{p} \quad \text{for all}\ f 
 \qquad \text{if and only if} \qquad e^{-tn} \leq \sqrt{\frac{p-1}{q-1}},
 \end{align}
 where $\|f\|_{p}^{p} =\|f\|_{L^{p}(\mathbb{S}^{n}, d\sigma_{n})}^{p}= \int_{\mathbb{S}^{n}} |f|^{p} d\sigma_{n}$, and  $d\sigma_{n}$ is the normalized surface area measure of $\mathbb{S}^{n}$. The case $n=1$ was solved independently in \cite{R1} and \cite{W1}, and the general case $n\geq 2$ was settled in \cite{MW1}. We remark that the condition $e^{-tn} \leq \sqrt{\frac{p-1}{q-1}}$ in (\ref{heat}) is different from the classical  hypercontractivity condition  $e^{-t} \leq \sqrt{\frac{p-1}{q-1}}$  in Gauss space due to Nelson~\cite{N1}, and on the hypercube due to Bonami~\cite{Bo1}. The appearance of the extra factor $n$ in (\ref{heat}) can be explained from the fact that the spectral gap (the smallest nonzero eigenvalue) of $-\Delta$ equals $n$. 
 
In \cite{MW1} the authors ask what the corresponding hypercontractivity estimates are   for the Poisson semigroup  on $\mathbb{S}^{n}$. As pointed out in \cite{MW1}, there are two natural  Poisson semigroups  on $\mathbb{S}^{n}$ one can consider: 1)  $e^{-t\sqrt{-\Delta}}f$, and  2)   $P_{r}f = \sum r^{d} H_{d}$, $r \in [0,1]$. Notice that when $n=1$ both of these  semigroups coincide (with $r = e^{-t}$). It was conjectured by E.~Stein that 
$$
\| P_{r} f\|_{q} \leq \|f\|_{p} \quad \text{if and only if} \quad r \leq \sqrt{\frac{p-1}{q-1}}
$$
holds on $\mathbb{S}^{n}$ for all $n\geq 1$. Besides the case $n=1$ mentioned above, the case $n=2$ was  confirmed in \cite{Ja1}, and the general case $n\geq 2$ in \cite{Be1}. 

The question of hypercontractivity for the semigroup $e^{-t\sqrt{-\Delta}}$ on $\mathbb{S}^{n}$ for $n\geq 2$, however, has remained open. Since the spectral gap of $\sqrt{-\Delta}$ equals $\sqrt{n}$, it is easy to see that a necessary condition for the estimate $\| e^{-t\sqrt{-\Delta}} f\|_{q} \leq \|f\|_{p}$ is $e^{-t\sqrt{n}} \leq \sqrt{\frac{p-1}{q-1}}$; see Section~\ref{auc}. One might conjecture that this necessary condition is also sufficient. Surprisingly, it turns out the answer is positive in small dimensions and negative in large dimensions. 
\begin{theorem}\label{mth}
Let $1<p<q$, $n\geq 1$, and $t\geq 0$. Then  
\begin{align}\label{pois}
\textup{(i)}\; \;   \| e^{-t\sqrt{-\Delta}} f\|_{q} \leq \|f\|_{p} \quad \text{for all}\ f
\qquad \text{implies} \qquad \textup{(ii)}\;\; e^{-t\sqrt{n}} \leq \sqrt{\frac{p-1}{q-1}}.
\end{align}
Moreover, $(ii)$ implies $(i)$ in dimensions $n \leq 3$. Finally, for any $q>\max\{2,p\}$, there exists $n_{0}=n_{0}(p,q)\geq 4$ such that $\textup{(ii)}$ does not imply $\textup{(i)}$ in dimensions $n$ with  $n\geq n_{0}$.
\end{theorem}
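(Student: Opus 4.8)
\emph{Proof proposal.} The plan is to dispatch the easy implication by a test function and then to convert the two quantitative statements into a single logarithmic Sobolev inequality whose validity will depend on the dimension. For $(i)\Rightarrow(ii)$ I would test $(i)$ on $f=1+\varepsilon Y_1$ with $Y_1$ a spherical harmonic of degree one, so that $e^{-t\sqrt{-\Delta}}f=1+\varepsilon\,e^{-t\sqrt n}Y_1$; since $\int_{\mathbb S^n}Y_1\,d\sigma_n=0$ and $\|1+\varepsilon g\|_r=1+\tfrac{r-1}{2}\varepsilon^2\|g\|_2^2+O(\varepsilon^3)$, the $\varepsilon^2$–coefficients in $(i)$ force $(q-1)e^{-2t\sqrt n}\le p-1$, which is $(ii)$; this is the computation of Section~\ref{auc}. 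For the rest, note that $e^{-t\sqrt{-\Delta}}=\int_0^\infty e^{s\Delta}\,\mu_t(\D s)$ is a symmetric Markov semigroup (subordinated to the positivity-preserving heat semigroup, and fixing constants), so Gross' equivalence between hypercontractivity and the logarithmic Sobolev inequality applies: the statement ``$(ii)\Rightarrow(i)$ for all admissible $p,q,t$ and all $f$'' is \emph{equivalent} to
\begin{equation}\label{eq:lsi-n}
\mathrm{Ent}_{\sigma_n}\!\bigl(u^2\bigr)\ \le\ \frac{2}{\sqrt n}\,\bigl\langle u,\sqrt{-\Delta}\,u\bigr\rangle_{L^2(\mathbb S^n)}\qquad\text{for all }u,
\end{equation}
where $\langle u,\sqrt{-\Delta}u\rangle=\|(-\Delta)^{1/4}u\|_2^2$ is the Dirichlet form of the semigroup. (One direction: differentiate $\|e^{-t\sqrt{-\Delta}}f\|_{q(t)}\le\|f\|_2$ at $t=0$ along $q(t)=1+e^{2t\sqrt n}$. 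The other: the differential method, using the Markovian pointwise inequality $\langle v^{q-1},\sqrt{-\Delta}v\rangle\ge\tfrac{4(q-1)}{q^2}\langle v^{q/2},\sqrt{-\Delta}v^{q/2}\rangle$, valid for every Dirichlet form, together with \eqref{eq:lsi-n} applied to $v^{q/2}$.) Thus the whole theorem reduces to: \eqref{eq:lsi-n} holds for $n\le 3$ and fails for all large $n$.

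For the positive direction ($n\le 3$) I would prove \eqref{eq:lsi-n} directly. Via the Caffarelli--Silvestre-type identity $\langle u,\sqrt{-\Delta}u\rangle=\min\bigl\{\int_{\mathbb S^n\times(0,\infty)}|\nabla U|^2:\;U|_{t=0}=u\bigr\}$ (minimiser $U(t,\cdot)=e^{-t\sqrt{-\Delta}}u$), \eqref{eq:lsi-n} becomes a trace logarithmic Sobolev inequality, $\mathrm{Ent}_{\sigma_n}(U(0,\cdot)^2)\le\tfrac{2}{\sqrt n}\int_{\mathbb S^n\times(0,\infty)}|\nabla U|^2$, on the half-cylinder $\mathbb S^n\times(0,\infty)$. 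For $n=1$ this cylinder is conformally the unit disc and, the Dirichlet integral being conformally invariant in dimension two, the inequality is exactly Weissler's logarithmic Sobolev inequality on the circle, which is known. For $n=2,3$ no conformal reduction is available; here I would expand in spherical harmonics and reduce the inequality to elementary sign conditions relating the sequences $d$, $\sqrt{d(d+n-1)}$ and $d(d+n-1)$, which hold precisely when $n$ is small. \textbf{This last estimate is the main obstacle}: for the heat semigroup the analogous step is the clean curvature-dimension bound $\Gamma_2\ge\rho\,\Gamma$, but $\sqrt{-\Delta}$ is non-local and no pointwise $\Gamma_2$-type inequality is available, so the argument has to be done by hand and genuinely uses $n\le 3$ (in spirit closer to the Mueller--Weissler proof of the sharp heat logarithmic Sobolev inequality than to Bakry--\'Emery).

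For the negative direction I would, for large $n$, exhibit a test function violating $(i)$ at the borderline time $t^\ast$, $e^{-t^\ast\sqrt n}=\sqrt{(p-1)/(q-1)}=:a_0$; since $\{t:(i)\text{ holds}\}$ is an up-interval ($e^{-t\sqrt{-\Delta}}$ is an $L^q$-contraction), a violation at $t^\ast$ suffices. The mechanism is the Poincar\'e/Gaussian limit: under the standard map $\mathbb S^n\to\mathbb R^k$ with coordinates rescaled by $\sqrt n$, $\sigma_n$ tends to the standard Gaussian $\gamma$, a degree-$d$ spherical harmonic tends to the Hermite polynomial $H_d$, and the degree-$d$ multiplier $e^{-t^\ast\sqrt{d(d+n-1)}}$ of $e^{-t^\ast\sqrt{-\Delta}}$ tends to $a_0^{\sqrt d}$, so $e^{-t^\ast\sqrt{-\Delta}}$ tends to the subordinated Ornstein--Uhlenbeck semigroup $e^{-\tau\sqrt L}$ with $e^{-\tau}=a_0$. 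On Gaussian space this limit semigroup is \emph{not} a contraction from $L^p(\gamma)$ to $L^q(\gamma)$ once $q>\max\{2,p\}$: already a degree-two perturbation of the constant, $g=1+\beta(x^2-1)$ with a suitable order-one $\beta$, does it---e.g.\ $g=x^2$ for $p=2,\ q=4$, where $e^{-\tau\sqrt L}g=1+a_0^{\sqrt2}(x^2-1)$ satisfies $\|e^{-\tau\sqrt L}g\|_{L^4(\gamma)}>\sqrt3=\|g\|_{L^2(\gamma)}$ by a direct Gaussian-moment computation (the point being $a_0^{\sqrt2}>a_0^2$, i.e.\ too little smoothing on the degree-two part). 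Transplanting this back---$f_n=1+\beta\bigl(n\,\xi_1^2-\tfrac{n}{n+1}\bigr)$, a multiple of the degree-two zonal harmonic on $\mathbb S^n$---one has $\|e^{-t^\ast\sqrt{-\Delta}}f_n\|_{L^q(\sigma_n)}\to\|e^{-\tau\sqrt L}g\|_{L^q(\gamma)}$ and $\|f_n\|_{L^p(\sigma_n)}\to\|g\|_{L^p(\gamma)}$ as $n\to\infty$, so $(i)$ fails for all $n\ge n_0(p,q)$. The hypothesis $q>2$ enters because the discrepancy between $\sqrt{d(d+n-1)}$ and $\sqrt n\,d$ is visible only on harmonics of degree $\ge 2$, to which only an $L^q$ norm with $q>2$ is sensitive; the remaining cases, when false, follow by duality since $e^{-t\sqrt{-\Delta}}$ is self-adjoint.
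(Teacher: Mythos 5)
Your reduction of the whole problem to the single log--Sobolev inequality $\mathrm{Ent}_{\sigma_n}(u^2)\le \frac{2}{\sqrt n}\langle u,\sqrt{-\Delta}\,u\rangle$ via Gross' equivalence matches the paper's strategy (the paper differentiates $\|e^{-t\sqrt{-\Delta}}f\|_{q(t)}$, reduces to $q=2$ by Gross' Theorem 4.1, and handles $p<2$ by duality and the semigroup property). But at the decisive point, the proof of this inequality for $n=2,3$, you offer only ``expand in spherical harmonics and reduce to elementary sign conditions,'' and you yourself flag it as the main obstacle. As stated this step cannot work: the entropy functional is not a quadratic form and does not diagonalize in spherical harmonics, so there is nothing to compare degree by degree. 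The missing ingredient is an input that dominates the entropy by a \emph{diagonal} quadratic form; the paper uses Beckner's conformal log--Sobolev inequality (a consequence of Lieb's sharp Hardy--Littlewood--Sobolev inequality), $\mathrm{Ent}_{\sigma_n}(g^2)\le\sum_k \Delta_n(k)\|H_k\|_2^2$ with $\Delta_n(k)=2n\sum_{m=0}^{k-1}\frac{1}{2m+n}$, after which the claim reduces to the elementary inequality $n\sum_{m=0}^{k-1}\frac{1}{2m+n}\le\sqrt{k(k+n-1)/n}$, proved for $n=2,3$ (and already false for $n=4$, $k=3$). Without this (or an equivalent) input, your positive direction is incomplete.

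The negative direction also has a genuine gap: a degree-two perturbation does \emph{not} witness failure for every $q>\max\{2,p\}$. Your own second-order computation shows small $\beta$ cannot work (since $(q-1)a_0^{2\sqrt 2}=(q-1)^{1-\sqrt2}(p-1)^{\sqrt2-1}\cdot(p-1)<p-1$ in the relevant regime), so everything hinges on moderate/large $\beta$; for large $\beta$ one needs $\|h_2\|_{L^q(\gamma)}/\|h_2\|_{L^p(\gamma)}>\bigl(\tfrac{q-1}{p-1}\bigr)^{\sqrt2/2}$. This happens to hold for $(p,q)=(2,4)$ (your example: $1+12c^2+32c^3+60c^4\approx 9.33>9$ with $c=3^{-1/\sqrt2}$), but it fails, for instance, for $p=2$ and $q$ close to $2$: there $\|h_2\|_q/\|h_2\|_2=1+\tfrac{q-2}{8}\,\mathrm{Ent}_\gamma(h_2^2)+o(q-2)$ with $\mathrm{Ent}_\gamma(h_2^2)\approx 3.1$, which is below the required slope $\tfrac{\sqrt2}{2}\cdot 8\approx 5.66$, and for intermediate $\beta$ the inequality is strict at $q=2$ and hence persists for $q$ near $2$. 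So no test function of fixed degree $2$ (indeed, of any fixed degree independent of $p,q$) can prove the statement for all $q>\max\{2,p\}$; one must let the degree $d$ grow. This is exactly what the paper does: it tests with zonal harmonics $Y_d$, proves the Gaussian limit $\|Y_d\|_q/\|Y_d\|_p\to\|h_d\|_{L^q(\gamma)}/\|h_d\|_{L^p(\gamma)}$ as $n\to\infty$ (your limiting picture, made rigorous by dominated convergence), and then invokes Larsson-Cohn's asymptotics $\lim_d\bigl(\|h_d\|_q/\|h_d\|_p\bigr)^{1/d}=\sqrt{(q-1)/(\max\{p,2\}-1)}>1$, which outgrows the allowed amplification $\bigl(\tfrac{q-1}{p-1}\bigr)^{\sqrt d/2}$. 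Your proposal is missing both this large-$d$ mechanism and the Hermite-norm asymptotics that make it quantitative.
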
 

 It remains an open  problem to  find a necessary and sufficient condition on $t> 0$ in dimensions $n\geq 4$ for which the semigroup $e^{-t\sqrt{-\Delta}}$ is hypercontractive from $L^{p}(\mathbb{S}^{n})$ to $L^{q}(\mathbb{S}^{n})$. 

\section{Proof of Theorem~\ref{mth}}

\subsection{The necessity part $\textup{(i)} \Rightarrow \textup{(ii)}$}\label{auc} 
We recall this standard argument for the sake of completeness. Let $f(\xi) = 1+\varepsilon H_{1}(\xi)$ where $H_{1}$ is any (real) spherical harmonic of degree $1$, i.e., $\Delta H_{1} = -n H_{1}$. Then $e^{-t\sqrt{-\Delta}}f(\xi) = 1+ \varepsilon e^{-t\sqrt{n} } H_{1}(\xi)$. As $\varepsilon \to 0$, we obtain 
\begin{align*}
&\int_{\mathbb{S}^{n}} | 1+ \varepsilon e^{-t\sqrt{n} } H_{1}(\xi)|^{q} d\sigma_{n} \\
& = \int_{\mathbb{S}^{n}} \left( 1 + q \varepsilon e^{-t \sqrt{n} } H_{1}(\xi) + \frac{q(q-1)}{2}\varepsilon^{2} e^{-2t\sqrt{n} } H^{2}_{1}(\xi) + O(\varepsilon^{3}) \right) d\sigma_{n} \\
& = 1 + \frac{q(q-1)}{2} \varepsilon^{2} e^{-2t\sqrt{n}} \|H_{1}\|_{2}^{2} + O(\varepsilon^{3}).
\end{align*} 
Thus, 
\begin{align}\label{tayheat}
\| e^{-t\sqrt{-\Delta}} f\|_{q} = 1 + \frac{q-1}{2} \varepsilon^{2} e^{-2t\sqrt{n}} \|H_{1}\|_{2}^{2} + O(\varepsilon^{3}).
\end{align}
Similarly, we have 
\begin{align}\label{tay}
\|f\|_{p} = 1 + \frac{p-1}{2} \varepsilon^{2}  \|H_{1}\|_{2}^{2} + O(\varepsilon^{2}).
\end{align}
Substituting (\ref{tayheat}) and (\ref{tay}) into the inequality $\| e^{-t\sqrt{-\Delta}} f\|_{q} \leq  \|f\|_{p}$, and taking $\varepsilon \to 0$ we obtain the necessary condition $e^{-2t\sqrt{n}} \leq \frac{p-1}{q-1}$ which coincides with $\textup{(ii)}$ in  (\ref{pois}).

\subsection{The sufficiency part $\textup{(ii)}  \Rightarrow \textup{(i)}$ in dimensions $n=1,2,3$.}

Our goal is to show that if $1<p < q$ and if $t\geq 0$ is such that $e^{-t2\sqrt{n}} \leq \frac{p-1}{q-1}$, then 
\begin{align}\label{hyp22}
\| e^{-t\sqrt{-\Delta}} f\|_{q} \leq \|f\|_{p} \quad \text{in dimensions} \quad n=1,2,3. 
\end{align}
The case $n=1$ was confirmed in \cite{W1}. In what follows we assume $n \in \{2,3\}$.
First we need the fact that the heat semigroup $e^{t \Delta}$ has a nonnegative kernel. Indeed, for each $t>0$ there exists $K_{t} : [-1,1] \to [0, \infty)$ such that 
$$
e^{t\Delta}f(\xi) = \int_{\mathbb{S}^{n}} K_{t}(\xi \cdot \eta) f(\eta) d\sigma_{n}(\eta),
$$
where $\xi \cdot \eta = \sum_{j=1}^{n+1} \xi_{j} \eta_{j}$ for $\xi = (\xi_{1}, \ldots, \xi_{n+1})$ and $\eta = (\eta_{1}, \ldots, \eta_{n+1})$, see, for example, Proposition 4.1 in \cite{MW1}.  Next, we recall the subordination formula 
\begin{align}\label{sub}
e^{-x} = \frac1{\sqrt\pi} \int_{0}^{\infty} e^{-y-x^{2}/(4y)} \frac{dy}{\sqrt y}  \quad \text{valid for all}  \ x \geq 0,
\end{align}
By the functional calculus, we deduce that the Poisson semigroup $e^{-t\sqrt{-\Delta}}$ has a positive kernel with total mass $1$. The latter fact together with the convexity of the map $x \mapsto |x|^{p}$ for $p\geq 1$ implies that $\| e^{-t\sqrt{-\Delta}}\|_{p}\leq \|f\|_{p}$ for all $t \geq 0$. Thus, it suffices to verify (\ref{hyp22}) for those $t\geq 0$ for which $e^{-2t\sqrt{n}} = \frac{p-1}{q-1}$. 

Next we claim that it suffices to verify (\ref{hyp22})  only for the powers $p,q$ such that $2\leq p \leq q$. Indeed, assume (\ref{hyp22}) holds  for $2\leq p\leq q$. By duality and the symmetry of the semigroup $e^{-t \sqrt{-\Delta}}$ we obtain $\| e^{-t\sqrt{-\Delta}} f\|_{p'} \leq \|f\|_{q'}$ where $p' = \frac{p}{p-1}$, $q' = \frac{q}{q-1}$, $1<q' \leq p'\leq 2$. Notice that $\frac{p-1}{q-1} = \frac{q'-1}{p'-1}$, thus we  extend (\ref{hyp22}) to all $p,q$ such that $1<p\leq q \leq 2$. It remains to extend (\ref{hyp22}) for those powers $p,q$ when $p\leq 2 \leq q$. To do so, let $p\leq 2 \leq q$, and let $t\geq 0$ be such $e^{-2t\sqrt{n}} = \frac{p-1}{q-1}$. Choose  $t_{1}, t_{2}\geq 0$ so that  $t=t_{1}+t_{2}$ and $e^{-2t_{1} \sqrt{n}} = p-1$ and $e^{-2t_{2}\sqrt{n}} = \frac{1}{q-1}$. Then we have 
\begin{align*}
\|e^{-t\sqrt{-\Delta}} f\|_{q} = \|e^{-t_{2}\sqrt{-\Delta}} (e^{-t_{1}\sqrt{-\Delta}} f)\|_{q} \leq \|e^{-t_{1}\sqrt{-\Delta}} f\|_{2} \leq \|f\|_{p}.
\end{align*}

In what follows we assume $2 \leq p\leq q$. We will use a standard argument to deduce the validity of the hypercontractivity estimate from a log Sobolev inequality. Nonnegativity of the kernel for the Poisson semigroup combined with the triangle inequality implies $|e^{-t\sqrt{-
\Delta}} f| \leq e^{-t\sqrt{-\Delta}} |f|$ for any $f$. Thus  by continuity  and  standard density arguments we can assume that $f\geq 0$, $f$ is not identically zero, and $f$ is smooth in $(\ref{hyp22})$.

The equality $e^{-2t \sqrt{n}} = \frac{p-1}{q-1}$ implies $q = 1+e^{2t\sqrt{n}}(p-1)$. Fix $p\geq 2$ and consider the map
\begin{align*}
\varphi(t) = \| e^{-t\sqrt{-\Delta}} f\|_{q(t)}>0, \quad t \geq 0,
\end{align*}
where $q(t) = 1+e^{2t\sqrt{n}}(p-1)$. If we show  $\varphi'(t) \leq  0$, then we obtain $\varphi(t) \leq \varphi(0) = \|f\|_{p}$, and this proves the sufficiency part. Let $\psi(t) = \ln \varphi(t)$. We have 
\begin{align*}
\frac{q^{2}}{q'}\psi'(t) = -\ln \left(\int_{\mathbb{S}^{n}} (e^{-t\sqrt{-\Delta}} f)^{q} d\sigma_{n}\right) + \frac{\int_{\mathbb{S}^{n}} (e^{-t\sqrt{-\Delta}}f)^{q} \left( \ln (e^{-t\sqrt{-\Delta}}f)^{q} + \frac{q^{2}}{q'} \frac{\partial_{t} e^{-t\sqrt{-\Delta}}f}{e^{-t\sqrt{-\Delta}}f} \right) d\sigma_{n}}{\int_{\mathbb{S}^{n}} (e^{-t\sqrt{-\Delta}} f)^{q} d\sigma_{n}}.
\end{align*} 
Clearly $\psi'\leq 0$ if and only if 
\begin{align*}
&\int_{\mathbb{S}^{n}} (e^{-t\sqrt{-\Delta}}f)^{q} \ln (e^{-t\sqrt{-\Delta}}f)^{q}d\sigma_{n} - \int_{\mathbb{S}^{n}} (e^{-t\sqrt{-\Delta}} f)^{q} d\sigma_{n}\ln \left(\int_{\mathbb{S}^{n}} (e^{-t\sqrt{-\Delta}} f)^{q} d\sigma_{n}\right) \\
& \leq \frac{q^{2}}{q'} \int_{\mathbb{S}^{n}} (e^{-t\sqrt{-\Delta}}f)^{q-1} \sqrt{-\Delta} (e^{-t\sqrt{-\Delta}}f) d\sigma_{n}.
\end{align*}
Let $g = e^{-t\sqrt{-\Delta}}f\geq 0$. Then we can rewrite the previous inequality as 
\begin{align}\label{log1}
\int_{\mathbb{S}^{n}} g^{q} \ln g^{q} d\sigma_{n} - \int_{\mathbb{S}^{n}} g^{q} d\sigma_{n} \ln \left(\int_{\mathbb{S}^{n}} g^{q}d\sigma_{n}\right) \leq \frac{q^{2}}{2(q-1) \sqrt{n} } \int_{\mathbb{S}^{n}}g^{q-1} \sqrt{-\Delta} g d\sigma_{n},
\end{align}
where we used the fact that $q' = 2(q-1)\sqrt{n}$. Since $e^{-t\sqrt{-\Delta}}$ is contractive in $L^{\infty}(\mathbb{S}^{n})$ with a nonnegative, symmetric kernel, it follows that the validity of the  estimate  (\ref{log1}) for $q=2$ implies (\ref{log1}) for all $q \in [2, \infty)$; see, e.g., Theorem 4.1 in \cite{LG1}.

Let  $g = \sum_{k\geq 0} H_{d}$ be the decomposition of $g$ into its spherical harmonics. Then the estimate (\ref{log1}) for $q=2$ takes the form 
\begin{align*}
\int_{\mathbb{S}^{n}} g^{2} \ln g^{2} d\sigma_{n} - \int_{\mathbb{S}^{n}} g^{2} d\sigma_{n} \ln \left(\int_{\mathbb{S}^{n}} g^{2}d\sigma_{n}\right) \leq  \sum_{k\geq 0} 2\sqrt{\frac{k(k+n-1)}{n}} \, \| H_{k}\|_{2}^{2} . 
\end{align*}

It follows from Beckner's conformal log Sobolev inequality \cite{Be1} (which is a consequence of Lieb's sharp Hardy--Littlewood--Sobolev inequality \cite{Li}) that for any smooth nonnegative $g  = \sum_{k\geq 0} H_{k}$ we have 
\begin{align*}
\int_{\mathbb{S}^{n}} g^{2} \ln g^{2} d\sigma_{n} - \int_{\mathbb{S}^{n}} g^{2} d\sigma_{n} \ln \left(\int_{\mathbb{S}^{n}} g^{2}d\sigma_{n}\right) \leq  \sum_{k\geq 0} \Delta_{n}(k)\, \| H_{k}\|_{2}^{2}
\end{align*}
with $\Delta_{n}(k) = 2n \sum_{m=0}^{k-1} \frac{1}{2m+n}$. Thus, the estimate (\ref{hyp22}) is a consequence of the following lemma.

\begin{lemma}
Let $n \in \{2,3\}$. Then for all integers $k\geq 1$ one has
\begin{align*}
n \sum_{m=0}^{k-1} \frac{1}{2m+n} \leq \sqrt{\frac{k(k+n-1)}{n}}.
\end{align*}
\end{lemma}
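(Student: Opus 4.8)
The plan is to argue by induction on $k$. Write $S_n(k)=n\sum_{m=0}^{k-1}\frac{1}{2m+n}$ for the left-hand side and $R_n(k)=\sqrt{k(k+n-1)/n}$ for the right-hand side. The base case $k=1$ holds with equality, since both quantities equal $1$. Since $S_n(k+1)=S_n(k)+\frac{n}{2k+n}$, the inductive step reduces to establishing the \emph{step inequality}
\begin{equation*}
R_n(k)+\frac{n}{2k+n}\;\le\;R_n(k+1)\qquad(k\ge 1).
\end{equation*}

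To prove this I would exploit the identity $R_n(k+1)^2-R_n(k)^2=\frac{2k+n}{n}$, which upon rationalizing gives
\begin{equation*}
R_n(k+1)-R_n(k)=\frac{(2k+n)/n}{R_n(k)+R_n(k+1)},
\end{equation*}
so the step inequality is equivalent to $(2k+n)^2\ge n^2\bigl(R_n(k)+R_n(k+1)\bigr)$. By AM--GM, $j(j+n-1)\le\bigl(\tfrac{2j+n-1}{2}\bigr)^2$, hence $R_n(j)\le\frac{2j+n-1}{2\sqrt n}$ and therefore $R_n(k)+R_n(k+1)\le\frac{2k+n}{\sqrt n}$. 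Thus it suffices that $(2k+n)^2\ge n^{3/2}(2k+n)$, i.e.\ $2k+n\ge n^{3/2}$. For $n=2$ this reads $2k+2\ge 2\sqrt2$, true for every $k\ge1$; for $n=3$ it reads $2k+3\ge 3\sqrt3\approx 5.196$, true for every $k\ge2$.

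The only case left uncovered by this crude bound is $n=3$, $k=1$, where I would simply check the step inequality by hand: $R_3(1)+\tfrac35=\tfrac85$ and $R_3(2)=\sqrt{8/3}$, and $(8/5)^2=\tfrac{64}{25}<\tfrac{8}{3}=R_3(2)^2$. Combined with the base case, this proves the lemma by induction. The main (if minor) obstacle is exactly this boundary behavior: the inequality is essentially sharp for small $k$ — equality at $k=1$, and for $n=3$ the square of the ratio at $k=2$ equals $\tfrac{64/25}{8/3}=\tfrac{24}{25}$ — which is why the simple AM--GM estimate only barely suffices, why the case $n=3$, $k=1$ must be treated separately, and, incidentally, why the statement fails outright for $n\ge4$ (already at $k=2$).
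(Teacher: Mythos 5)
Your proof is correct, but it takes a genuinely different route from the paper. The paper verifies $k=1,2,3$ by direct computation and then, for $k\ge 4$, bounds the sum $\sum_{m=0}^{k-1}\frac{1}{m+n/2}$ by $\frac2n+\ln\bigl(\frac{k+n/2-1}{n/2}\bigr)$ via integral comparison, and finishes by showing that the resulting difference $h(k)$ is decreasing in $k$ with $h(4)\le 0$ for $n=2,3$ — a calculus argument on the logarithmic majorant. You instead prove the stronger increment-wise statement
\begin{align*}
R_n(k)+\frac{n}{2k+n}\le R_n(k+1),\qquad R_n(k)=\sqrt{\tfrac{k(k+n-1)}{n}},
\end{align*}
and induct: rationalizing with $R_n(k+1)^2-R_n(k)^2=\frac{2k+n}{n}$ reduces the step to $(2k+n)^2\ge n^2\bigl(R_n(k)+R_n(k+1)\bigr)$, and the AM--GM bound $R_n(j)\le\frac{2j+n-1}{2\sqrt n}$ reduces that to $2k+n\ge n^{3/2}$, which covers everything except $n=3$, $k=1$, which you check by hand (correctly: $(8/5)^2=\frac{64}{25}<\frac83$). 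All the intermediate identities and estimates check out, and your observations about sharpness — equality at $k=1$, the ratio $\frac{24}{25}$ at $n=3$, $k=2$, and failure at $n=4$, $k=2$ — are accurate and explain why the crude bound barely suffices. What each approach buys: yours is more elementary (no logarithms, no monotonicity computation, only one exceptional case beyond the trivial base case) and yields the stronger fact that each increment of the left side is dominated by the corresponding increment of the right side; the paper's integral-comparison argument is a more standard template for such harmonic-type sums and makes transparent the asymptotic reason the lemma holds (logarithmic versus square-root growth), at the cost of three hand-checked cases and a derivative computation.
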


\begin{proof}
	We first check the inequality for $k\leq 3$ by direct computation. Indeed, the case $k=1$ is an equality. The case $k=2$ can be checked as follows,
 \begin{align*}
1 + \frac{n}{2+n} = \frac{2+2n}{2+n}\leq \sqrt{\frac{2+2n}{n}}, 
 \end{align*}
 which is true because $n(2+2n) \leq (2+n)^{2}$ holds for $n=2,3$. The case $k=3$ can be checked similarly: 
 \begin{align*}
 \frac{2+2n}{2+n}+ \frac{n}{4+n} \leq \sqrt{\frac{6+3n}{n}}
 \end{align*}
 holds for $n=2,3$ (notice that this inequality fails for $n=4$). 
 
 Next, we assume $k\geq 4$. We have
 \begin{align*}
 \sum_{m=0}^{k-1} \frac{1}{m+\frac{n}{2}} =  \frac{2}{n} + \sum_{m=1}^{k-1}\frac{1}{m+\frac{n}{2}} \leq \frac{2}{n} + \int_{0}^{k-1} \frac{1}{x+\frac{n}{2}}dx =\frac{2}{n} + \ln\left(\frac{k+\frac{n}{2}-1}{\frac{n}{2}}\right).
 \end{align*}
 Thus it suffices to show 
 \begin{align*}
\frac{2}{n} + \ln\left(\frac{k+\frac{n}{2}-1}{\frac{n}{2}}\right) - \frac{2}{n} \sqrt{\frac{k(k+n-1)}{n}} \leq 0.
 \end{align*}
 Notice that the left hand side, call it $h(k)$, is decreasing in $k$. Indeed, we have  
 \begin{align*}
 h'(k) = \frac{1}{\frac{n}{2}+k-1} - \frac{2k+n-1}{n\sqrt{kn(k+n-1)}} \leq \frac{1}{\frac{n}{2}+k-1} - \frac{1}{\sqrt{kn}} \leq \frac{1}{2 \sqrt{\frac{n}{2}(k-1)}} - \frac{1}{\sqrt{kn}} \leq 0. 
 \end{align*}
 On the other hand,  we have for $n=2,3$,
 \begin{align*}
 h(4) = \frac{2}{n} + \ln \left(\frac{6+n}{n}\right) - \frac{2}{n} \sqrt{\frac{12+4n}{n}} \leq 0.
 \end{align*}
Indeed, if $n=2$, $h(4) = 1+2\ln 2 -\sqrt{10}<0$, and if $n=3$, $h(4) = \frac{2+3\ln 3 - 4\sqrt{2}}{3}<0$. 
\end{proof}

\subsection{Counterexample to $\textup{(ii)} \Rightarrow \textup{(i)}$ in high dimensions}  
Let $\lambda := \frac{n-1}{2}$, and let $C_{d}^{(\lambda)}(x)$ be the Gegenbauer polynomial 
\begin{align}\label{geg1}
C_{d}^{(\lambda)}(x) = \sum_{j=0}^{\lfloor \frac{d}{2}\rfloor} (-1)^{j} \frac{\Gamma(d-j+\lambda)}{\Gamma(\lambda) j! (d-2j)!} (2x)^{d-2j},
\end{align}
where $\lfloor \frac{d}{2} \rfloor$ denotes the largest integer $m$ such that $m \leq  \frac{d}{2}$, and $\Gamma(x)$ is the Gamma function.
Notice that if we let $Y_{d}(\xi) = C_{d}^{(\lambda)}(\xi \cdot e_{1})$, where $e_{1} = (1, 0, \ldots, 0) \in \mathbb{R}^{n+1}$, then $Y_{d}(\xi)$ is a spherical harmonic of degree $d$ on $\mathbb{S}^{n}$. In particular, for $t\geq 0$ such that $e^{-2t\sqrt{n}} = \frac{p-1}{q-1}$,  the estimate $\|e^{-t \sqrt{-\Delta}} f\|_{L^{q}(\mathbb{S}^{n})} \leq \|f\|_{L^{p}(\mathbb{S}^{n})}$ applied to $f=Y_{d}(\xi)$ is equivalent to the estimate 
\begin{align}\label{count1}
 \frac{\| Y_{d}\|_{q}}{\|Y_{d}\|_{p}} \leq e^{t \sqrt{d(d+n-1)}} = \left(\frac{q-1}{p-1}\right)^{\frac{1}{2} \sqrt{\frac{d(d+n-1)}{n}}}.
\end{align}
Next, we need
\begin{lemma}
For any $d\geq 0$ we have 
\begin{align}\label{zgvari}
\lim_{n \to \infty} \,  \frac{\| Y_{d}\|_{L^{q}(\mathbb{S}^{n}, d\sigma_{n})}}{\|Y_{d}\|_{L^{p}(\mathbb{S}^{n}, d\sigma_{n})}} = \frac{\|h_{d}\|_{L^{q}(\mathbb{R}, d\gamma)}}{\|h_{d}\|_{L^{p}(\mathbb{R}, d\gamma)}}, 
\end{align}
where $d\gamma(y) = \frac{e^{-y^{2}/2}}{\sqrt{2\pi}}dy$ is the standard Gaussian measure on the real line, and 
 $h_{d}(x)$ is the probabilistic Hermite polynomial 
 \begin{align}\label{herm1}
h_{d}(x) =  \sum_{j=0}^{\lfloor\frac{d}{2}\rfloor} \frac{(-1)^{j}d! }{j! (d-2j)!} \frac{x^{d-2j}}{2^{j}}.
 \end{align}
 \end{lemma}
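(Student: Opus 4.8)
The plan is to express the $L^r$ norms of $Y_d$ as one-dimensional integrals against the law of a single coordinate, to rescale that coordinate by $\sqrt n$ so that its law becomes asymptotically Gaussian, and to recognize that under the same rescaling $C_d^{(\lambda)}$ becomes, up to an explicit scalar, the Hermite polynomial $h_d$. Concretely: since $Y_d(\xi)=C_d^{(\lambda)}(\xi\cdot e_1)$ depends on $\xi$ only through $t:=\xi\cdot e_1$, and the push-forward of $\sigma_n$ under $\xi\mapsto\xi\cdot e_1$ has density $c_n(1-t^2)^{(n-2)/2}$ on $[-1,1]$ with $c_n=\Gamma(\tfrac{n+1}{2})/\bigl(\Gamma(\tfrac12)\Gamma(\tfrac n2)\bigr)$, one has for every $r\in[1,\infty)$
\[
\|Y_d\|_{L^r(\mathbb{S}^n,d\sigma_n)}^r=c_n\int_{-1}^1\bigl|C_d^{(\lambda)}(t)\bigr|^r(1-t^2)^{(n-2)/2}\,dt .
\]
Substituting $t=y/\sqrt n$, setting $g_n(y):=\tfrac{d!}{n^{d/2}}\,C_d^{(\lambda)}(y/\sqrt n)$, and letting $\mu_n$ be the probability measure on $\RR$ with density $\tfrac{c_n}{\sqrt n}(1-y^2/n)^{(n-2)/2}$ on $[-\sqrt n,\sqrt n]$ (and $0$ elsewhere), this becomes
\[
\Bigl(\tfrac{d!}{n^{d/2}}\Bigr)^{r}\|Y_d\|_{L^r(\mathbb{S}^n,d\sigma_n)}^r=\int_\RR\bigl|g_n(y)\bigr|^r\,d\mu_n(y) ,
\]
and the scalar $d!/n^{d/2}$ cancels in the ratio in (\ref{zgvari}); since $h_d$ and $C_d^{(\lambda)}$ are nonzero polynomials all quantities are strictly positive, so it suffices to prove $\int_\RR|g_n|^r\,d\mu_n\to\int_\RR|h_d|^r\,d\gamma$ as $n\to\infty$ for $r\in\{p,q\}$.

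Next I establish the two relevant limits. First, from $\dfrac{\Gamma(d-j+\lambda)}{\Gamma(\lambda)}=\lambda(\lambda+1)\cdots(\lambda+d-j-1)\sim\lambda^{\,d-j}$ as $\lambda=\tfrac{n-1}{2}\to\infty$, a term-by-term comparison of (\ref{geg1}) with (\ref{herm1}) shows that each coefficient of the polynomial $g_n$ (of degree $\le d$) converges to the corresponding coefficient of $h_d$; in particular $g_n\to h_d$ uniformly on compact subsets of $\RR$ (this is the classical Gegenbauer-to-Hermite limit). Second, $(1-y^2/n)^{(n-2)/2}\to e^{-y^2/2}$ pointwise and $c_n/\sqrt n\to 1/\sqrt{2\pi}$ by Stirling, so $\mu_n\to\gamma$ weakly; moreover the even moments are explicit,
\[
\int_\RR y^{2m}\,d\mu_n=\frac{n^{m}\,(2m-1)!!}{(n+1)(n+3)\cdots(n+2m-1)}\longrightarrow(2m-1)!!\qquad(n\to\infty),
\]
so $\sup_n\int_\RR y^{2m}\,d\mu_n<\infty$ for every integer $m\ge1$.

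Finally I pass to the limit in the integrals. Since the coefficients of $g_n$ are bounded uniformly in $n$, we have $|g_n(y)|^{r(1+\varepsilon)}\le C\,(1+y^2)^{M}$ with $M$ an integer and $C$ both independent of $n$, for some small $\varepsilon>0$; combined with the moment bound above this gives $\sup_n\int_\RR|g_n|^{r(1+\varepsilon)}\,d\mu_n<\infty$, so $\{|g_n|^r\}_n$ is uniformly integrable with respect to $\{\mu_n\}_n$. Splitting $\int_\RR|g_n|^r\,d\mu_n$ over $\{|y|\le L\}$ and $\{|y|>L\}$, using $g_n\to h_d$ uniformly on the first region together with $\mu_n\to\gamma$ weakly, and the uniform smallness of the tail in $n$ (from the moment bound), then letting $n\to\infty$ and afterwards $L\to\infty$, yields $\int_\RR|g_n|^r\,d\mu_n\to\int_\RR|h_d|^r\,d\gamma$ for $r\in\{p,q\}$; dividing the two limits and taking $q$-th and $p$-th roots gives (\ref{zgvari}). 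The only genuinely delicate point is this last step — interchanging the limit with the integral when $p,q$ are not assumed to be integers and the measure $\mu_n$ itself varies with $n$ — which is exactly what the uniform moment bounds on $\mu_n$ are used for; everything else reduces to routine manipulations of the explicit formulas (\ref{geg1}) and (\ref{herm1}).
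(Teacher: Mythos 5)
Your proposal is correct and follows essentially the same route as the paper: reduce the norms to one-dimensional integrals against the marginal density $c_n(1-t^2)^{(n-2)/2}$, rescale $t$ by $\sqrt{n}$, and use the coefficientwise Gegenbauer-to-Hermite limit $\frac{d!}{n^{d/2}}C_d^{(\lambda)}(y/\sqrt{n})\to h_d(y)$. The only (minor) difference lies in justifying the limit--integral interchange: the paper dominates the rescaled density by a fixed Gaussian and the rescaled polynomial by a fixed polynomial and applies dominated convergence, whereas you use uniformly bounded moments of the rescaled marginals (uniform integrability) together with weak convergence and a truncation argument --- both are valid.
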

 \begin{proof}
Indeed,  notice that 
\begin{align}\label{bol2}
\|Y_{d}\|_{p}^{p} = \int_{\mathbb{S}^{n}}|C_{d}^{(\lambda)}(\xi \cdot e_{1})|^{p} d\sigma_{n}(\xi)= \int_{-1}^{1}|C_{d}^{(\lambda)}(t)|^{p} c_{\lambda} (1-t^{2})^{\lambda- \frac{1}{2}} dt,
 \end{align}
 where $c_{\lambda} = \frac{\Gamma(\lambda+1)}{\Gamma(\frac{1}{2}) \Gamma(\lambda+\frac{1}{2})}$. In  particular, after the change of variables $t = \frac{s}{\sqrt{2\lambda}}$ in (\ref{bol2}), and multiplying both sides in (\ref{bol2}) by $(d!/(2\lambda)^{d/2})^{p}$ we obtain 
 \begin{align*}
 \left(\frac{d!}{(2\lambda)^{d/2}} \right)^{p}  \|Y_{d}\|_{p}^{p} = \int_{\mathbb{R}}\left| \frac{d!}{(2\lambda)^{d/2}} C_{d}^{(\lambda)}\left( \frac{s}{\sqrt{2\lambda}}\right)\right|^{p} \frac{c_{\lambda}}{\sqrt{2\lambda}} \left(1-\frac{s^{2}}{2\lambda}\right)^{\lambda- \frac{1}{2}} \mathbbm{1}_{[-\sqrt{2\lambda}, \sqrt{2\lambda}]}(s)ds, 
 \end{align*}
 where $\mathbbm{1}_{[-\sqrt{2\lambda}, \sqrt{2\lambda}]}(s)$ denotes the indicator function of the set $[-\sqrt{2\lambda}, \sqrt{2\lambda}]$.
 Notice that by Stirling's formula for any $j\geq 0$, and any $d\geq 0$  we have 
 \begin{align}\label{pred}
  \lim_{\lambda \to \infty}\,  \frac{1}{\lambda^{d-j}}\frac{\Gamma(d-j+\lambda)}{\Gamma(\lambda)} = 1.
 \end{align}
 Therefore, (\ref{herm1}) and (\ref{geg1}) together with (\ref{pred}) imply that for all $s \in \mathbb{R}$ we have
 \begin{align*}
 \lim_{\lambda \to \infty}  \frac{d!}{(2\lambda)^{d/2}} C_{d}^{(\lambda)}\left( \frac{s}{\sqrt{2\lambda}} \right) = h_{d}(s).
 \end{align*}
 Invoking Stirling's formula again we have 
 \begin{align*}
 \lim_{\lambda \to \infty} \frac{c_{\lambda}}{\sqrt{2\lambda}} \left(1-\frac{s^{2}}{2\lambda}\right)^{\lambda- \frac{1}{2}}\mathbbm{1}_{[-\sqrt{2\lambda}, \sqrt{2\lambda}]}(s) = \frac{e^{-s^{2}/2}}{\sqrt{2\pi}} \quad \text{for all} \quad s \in \mathbb{R}.
 \end{align*}
 Finally, to apply  Lebesgue's dominated convergence theorem  it suffices to verify that for all $s \in \mathbb{R}$ and all $\lambda\geq\lambda_0$ we have the following pointwise estimates 
 \begin{align*}
 &a) \quad \frac{c_{\lambda}}{\sqrt{2\lambda}} \left(1-\frac{s^{2}}{2\lambda}\right)^{\lambda- \frac{1}{2}}\mathbbm{1}_{[-\sqrt{2\lambda}, \sqrt{2\lambda}]}(s)  \leq C e^{-s^{2}/2}\\
 &b) \quad \frac{d!}{(2\lambda)^{d/2}} C_{d}^{(\lambda)}\left( \frac{s}{\sqrt{2\lambda}}\right) \leq c_{1}(d) (1+|s|)^{c_{2}(d)},
\end{align*}
where $\lambda_{0}, C,  c_{1}(d), c_{2}(d)$ are some positive  constants independent of $\lambda$ and $s$.  

To verify a) it suffices to consider the case $s \in [-\sqrt{2\lambda}, \sqrt{2\lambda}]$. Since $\lim_{\lambda \to \infty}\frac{c_{\lambda}}{\sqrt{2\lambda}} = \frac{1}{\sqrt{2\pi}}$ it follows that $\frac{c_{\lambda}}{\sqrt{2\lambda}} \leq C$ for all $\lambda \geq \lambda_{0}$, where $\lambda_{0}$ is a sufficiently large number. Next, the estimate $(1-\frac{s^{2}}{2\lambda})^{\lambda -1/2} \leq C' e^{-s^{2}/2}$ for $s \in [-\sqrt{2\lambda}, \sqrt{2\lambda}]$ follows if we show that $(1-\frac{1}{2\lambda})\ln(1-t) \leq C''/\lambda -t$ for all $t := \frac{s^{2}}{2\lambda} \in [0,1]$ where $C''$ is a universal positive constant. The latter inequality follows from $\ln (1-t) \leq -t$ for $t \in [0,1]$. 

To verify b) it suffices to show that for all  $\lambda \geq \lambda_{0}>0$ and all integers $j$ such that $d\geq j \geq 0$ one has
\begin{align*}
 \frac{1}{\lambda^{d-j}} \frac{\Gamma(d-j+\lambda)}{\Gamma(\lambda)} \leq C(d-j),
\end{align*}
where $C(d-j)$ depends only on $d-j$. The latter inequality follows from (\ref{pred}) provided that $\lambda \geq \lambda_{0}$ where $\lambda_{0}$ is a sufficiently large number. 

Thus, it follows from the Lebesgue's dominated convergence theorem that 
$$
\lim_{n \to \infty} \frac{d!}{(n-1)^{d/2}} \|Y_{d}\|_{L^{p}(\mathbb{S}^{n}, d\sigma_{n})}  = \| h_{d}\|_{L^{p}(\mathbb{R}, d\gamma)}.
 $$
 The lemma is proved.
 \end{proof}
 
Now we fix $q> \max\{p, 2\}$  and, in order to prove the failure of $\textup{(ii)} \Rightarrow \textup{(i)}$ for all sufficiently large $n$, we argue by contradiction and assume that there is a sequence of dimensions $\{n_{j}\}_{j \geq 1}$ going to infinity such that $\textup{(ii)} \Rightarrow \textup{(i)}$ in Theorem~\ref{mth} does hold. Then, by combining (\ref{count1}) and (\ref{zgvari}) we have 
\begin{align}
	\label{eq:contaass}
	\frac{\|h_{d}\|_{L^{q}(\mathbb{R}, d\gamma)}}{\|h_{d}\|_{L^{p}(\mathbb{R}, d\gamma)}} \leq \left(\frac{q-1}{p-1}\right)^{\frac{\sqrt{d}}{2}}.
\end{align}
On the other hand, a consequence of the main result in \cite{LAC} and the assumption $q>\max\{p, 2\}$ is that
 \begin{align*}%\label{kkl}
 \lim_{d \to \infty} \left(\frac{\|h_{d}\|_{L^{q}(\mathbb{R}, d\gamma)}}{\|h_{d}\|_{L^{p}(\mathbb{R}, d\gamma)}} \right)^{1/d} = \left(\frac{q-1}{\max\{p,2\}-1}\right)^{\frac{1}{2}},
 \end{align*}
which is in contradiction with (\ref{eq:contaass}).

 \begin{remark}
 Let $B(x,y)$ be the Beta function.  The estimate (\ref{count1}) for $p=2$  and $q=4$  takes the form
 \begin{align}\label{utol1}
 \int_{-1}^{1}|C_{d}^{(\frac{n-1}{2})}(t)|^{4} (1-t^{2})^{\frac{n-2}{2}} dt  \leq 9^{\sqrt{ \frac{d(d+n-1)}{n}}}  \frac{(n-1)^{2} B(1/2, n/2)}{d^{2}(2d+n-1)^{2} B^{2}(n-1, d)},
 \end{align}
 where we used the fact that $\|Y_{d}\|^{2}_{L^{2}(\mathbb{S}^{n})} = \frac{n-1}{d(2d+n-1) B(n-1, d)}$. The numerical computations show that the inequality (\ref{utol1}) already fails for $d=7$ and $n=13$. 
\end{remark}

\subsection*{Acknowledgements} Partial support through US National Science Foundation grants DMS-1363432 and DMS-1954995 (R.L.F.) as well as DMS-2052645, DMS-1856486, and CAREER-DMS-2052865, CAREER-DMS-1945102 (P.I.) is acknowledged.

%%%%%%%%%%%%%%%%%%%%%

\end{document}